\newtheorem{thm}{Theorem}
\newtheorem{defn}{Definition}
\newtheorem{ass}{Assumption}
\newcommand{\argmin}{\mathop{\rm argmin}}
\newcommand{\R}{{\mathbb R}}
\newcommand{\trace}[1]{\mathop{\rm Tr}\left(#1\right)}
\renewcommand{\arraystretch}{0.9}
\newcommand{\Rset}{\mathbb{R}}
\newcommand{\Sset}{\mathbb{S}}
\newcommand{\Kcal}{{\mathcal{K}}}
\newcommand{\Mcal}{{\cal M}}
\newcommand{\Ncal}{{\mathcal{N}}}
\newcommand{\Pcal}{{\cal P}}
\newcounter{l1}
\newcounter{l2}
\newcounter{l3}
\newcommand{\bdotlist}{\begin{list}{$\bullet$}{}}
\newcommand{\bboxlist}{\begin{list}{$\Box$}{}}
\newcommand{\bbboxlist}{\begin{list}{\raisebox{.005in}{{\tiny
$\blacksquare$ \ \ }}}{}}
\newcommand{\bdashlist}{\begin{list}{$-$}{} }
\newcommand{\blist}{\begin{list}{}{} }
\newcommand{\barablist}{\begin{list}{\arabic{l1}}{\usecounter{l1}}}
\newcommand{\balphlist}{\begin{list}{(\alph{l2})}{\usecounter{l2}}}
\newcommand{\bAlphlist}{\begin{list}{\Alph{l2}.}{\usecounter{l2}}}
\newcommand{\bdiamlist}{\begin{list}{$\diamond$}{}}
\newcommand{\bromalist}{\begin{list}{(\roman{l3})}{\usecounter{l3}}}
\begin{document}

\title{\LARGE \bf A Distributionally Robust Approach to Regret Optimal Control using the Wasserstein Distance}
\author{Feras Al Taha,  \, Shuhao Yan, \, Eilyan Bitar 
\thanks{This paper will appear in the proceedings of the 2023 IEEE Conference on Decision and Control (CDC). The original version of this paper was posted on arXiv on April 23, 2023.}
\thanks{This work was supported in part by The Nature Conservancy, in part by the Cornell Atkinson Center for Sustainability, and in part by the Natural Sciences and Engineering Research Council of Canada.}
\thanks{The authors are with the School of Electrical and Computer Engineering, Cornell University, Ithaca, NY, 14853, USA.  Emails:  Feras Al Taha (foa6@cornell.edu), Shuhao Yan (sy499@cornell.edu),  Eilyan Bitar (eyb5@cornell.edu).  }%
}

\maketitle 

\begin{abstract} 
    This paper proposes a distributionally robust approach to regret optimal control of discrete-time linear dynamical systems with quadratic costs subject to a stochastic additive disturbance on the state process. The underlying probability distribution of the disturbance process is unknown, but assumed to lie in a given ball  of distributions defined in terms of the type-2 Wasserstein distance. In this framework, strictly causal linear disturbance feedback controllers are designed to minimize the worst-case expected regret. The regret incurred by a controller is defined as the difference between the cost it  incurs in response to a realization of the disturbance process and the cost incurred by the optimal noncausal controller which has perfect knowledge of the disturbance process realization at the outset. Building on a well-established duality theory for optimal transport problems, we derive a reformulation of the minimax regret optimal control problem as a tractable semidefinite program. Using the equivalent dual reformulation, we characterize a worst-case distribution achieving the worst-case expected regret in relation to the distribution at the center of the Wasserstein ball. We compare the minimax regret optimal control design method with the distributionally robust optimal control approach using an illustrative example and numerical experiments.
\end{abstract}

\section{Introduction} \label{sec:introduction}

There is growing interest in utilizing regret-based performance metrics, originally proposed in \cite{savage1951theory},  to design controllers for uncertain dynamical systems. Loosely speaking, regret measures the loss in performance incurred by a causal controller relative to a clairvoyant  controller that has complete knowledge of the underlying system dynamics and disturbance process at the outset. The regret minimization paradigm has been used to address a variety of estimation  and decision problems,  including robust  estimation \cite{eldar2004competitive, eldar2004linear}, multi-armed bandits \cite{bubeck2012regret},  online convex optimization \cite{shalev2012online}, and adaptive control \cite{abbasi2011regret}. 

In the context of linear quadratic (LQ) control problems with unknown state and input matrices, \cite{dean2018regret} and \cite{ouyang2019posterior} design learning-enabled controllers that achieve expected regret rates which scale optimally with the control horizon. 
More recently, \cite{9483023} and \cite{10061197} introduced the framework of regret optimal control for LQ problems, where the disturbance acting on the system is assumed to be adversarial in nature. Using operator-theoretic techniques from robust control, controllers in state-space representation are synthesized to minimize the worst-case regret over all possible disturbances with bounded energy. 
Building on this framework,  \cite{didier2022system} and \cite{martin2022safe} synthesize regret optimal controllers subject to state and input constraints. Using the system level parameterization \cite{anderson2019system}, they provide  equivalent reformulations of the resulting minimax regret optimal control problems as semidefinite programs (SDPs).
While the resulting controllers are robust with respect to the worst-case disturbance realization, they may be conservative if the underlying disturbance is actually stochastic in nature. 

Taking a probabilistic view, we propose an alternative approach to regret optimal control, where controllers are designed to minimize the worst-case expected regret over all probability distributions in a given ambiguity set specified in terms of the type-2 Wasserstein distance. In this setting, the expected regret is defined as the difference between an expected cost incurred by a causal controller and the expected cost incurred by the optimal noncausal controller with perfect knowledge of the disturbance trajectory at the outset. 
The proposed approach can also be thought of as an alternative to distributionally robust optimal (DRO) control, see, i.e., \cite{van2015distributionally,pmlr-v120-coppens20a,kim2023distributional} and references therein. 
A potential drawback of the DRO control approach, which aims to minimize the worst-case expected cost across a given ambiguity set of distributions, is that resulting controllers may incur a large expected regret for certain distributions in the ambiguity set. 
In contrast,  by minimizing the worst-case expected regret,  the approach proposed in this paper ensures that resulting controllers have uniformly small regret for all distributions in the given ambiguity set.
In Section \ref{sec:example}, we provide a motivating example that clearly illustrates some of the trade-offs inherent to these two methods.

\paragraph*{Summary of contributions} In this paper, we propose a new framework for regret optimal controller synthesis using distributionally robust optimization methods. 
The control problem is formulated as a minimax optimization problem, involving a worst-case expectation problem over a Wasserstein ball of distributions. 
Utilizing duality for distributionally robust optimization problems \cite{gao2022distributionally}, we obtain an equivalent convex reformulation of the worst-case expectation problem, which  facilitates the reformulation of the minimax regret optimal control problem as a SDP. 
Our result extends a number of related results on the exact reformulation of worst-case expectation problems involving quadratic objective functions \cite{nguyen2022distributionally,kuhn2019wasserstein}. We also provide an explicit characterization of a worst-case distribution achieving the worst-case expectation in relation to the central distribution in the ambiguity set. 

The remainder of this paper is organized as follows. In Section \ref{sec:formulation}, we formulate the minimax regret optimal control problem for linear disturbance feedback control policies.
In Section \ref{sec:example}, we provide a stylized example that clearly illustrates some of the strengths and limitations  of our proposed approach.
In Section \ref{sec:reformulation}, we derive an equivalent reformulation of the minimax regret optimal control problem as a SDP. 
In Section \ref{sec:experiments}, we present numerical experiments which show that the proposed approach can significantly outperform the distributionally robust optimal control approach in certain settings. We conclude the paper in Section \ref{sec:conclusion}. 

\subsubsection*{Notation} Let $\Rset$ and $\Rset_+$ denote the set of real numbers and nonnegative real numbers, respectively.  Let $\Sset^n$ denote the set of all symmetric matrices in $\Rset^{n \times n}$. Denote the cone of $n \times n$  real symmetric positive definite (semidefinite) matrices by $\Sset_{++}^{n}$ ($\Sset_+^{n}$). Denote the Euclidean norm of a vector $x \in \Rset^n$ by $\|x\|$. Given matrices $A, B \in \Sset^n$, the relation $A \succ B$ means $A - B \in \Sset^n_{++}$, and the relation $A \succeq B$ means $A-B \in \Sset^n_+$. Denote the maximum and minimum eigenvalues of a matrix $A \in \Rset^{n \times n}$ by $\lambda_{\rm max}(A)$ and $\lambda_{\rm min}(A)$, respectively. Given a matrix $A \in \Sset_+^n$,  the matrix $A^{\frac{1}{2}}$ denotes the (symmetric) positive semidefinite square root of $A$. Let $\Mcal(\Rset^{n})$ be the collection of Borel probability measures on $\Rset^{n}$ with finite second moments.

\section{Problem Formulation} \label{sec:formulation}

\subsection{System Model}
We consider discrete-time, linear time-varying systems evolving over a finite horizon $t=0, \dots, T-1$, with dynamics 
\begin{align} \label{eq:LTI}
x_{t+1} = A_tx_t + B_tu_t + w_t,
\end{align}
where $x_t \in \Rset^n$ is the \emph{system state}, $u_t \in \Rset^m$ is the \emph{control input}, and $w_t \in \Rset^n$ is the \emph{disturbance} acting on the system at  time $t$. We assume perfect state feedback, and that the system matrices $A_t \in \Rset^{n \times n}$ and input matrices $B_t \in \Rset^{n \times m} $ for $t=0, \dots, T-1$ are known at the outset.  Both the initial system state $x_0$ and  the disturbances $w_0, \dots, w_{T-1}$ are assumed to be random variables whose joint distribution $P$ is unknown but assumed to lie in a given compact set $\mathcal{P}$, termed the \emph{ambiguity set}.  

We denote the \emph{state, input,} and \emph{disturbance trajectories} by
\begin{align*}
x & :=   (x_0, \dots, x_T) \in \Rset^{N_x},\\
u &  := (u_0, \dots, u_{T-1}) \in \Rset^{N_u} ,\\
w &  := (x_0, w_0, \dots, w_{T-1}) \in \Rset^{N_x},
\end{align*}
where $N_x := n(T+1)$ and $N_u := mT$. 
To simplify notation, we have included the initial state $x_0$ as the first term in the disturbance trajectory $w$.  With these definitions, the dynamics \eqref{eq:LTI} can be expressed in terms of the following causal linear mapping from the  input  and disturbance trajectories to the  state trajectory:
\begin{align} \label{eq:state_traj}
x = Fu + Gw,
\end{align}
 where $F \in \Rset^{N_x \times N_u}$ and $G \in \Rset^{N_x \times N_x}$ are  block lower-triangular matrices. These matrices are straightforward to construct from the given state and input matrices $(A_t, \, B_t)$, where $t=0, \dots, T-1$. 

The cost incurred by an input trajectory $u$ and disturbance trajectory $w$ is defined as \begin{align} \label{eq:cost}
J(u, \, w) :=  x^\top Q x \, + \, u^\top R u, 
\end{align}
where $Q \in \Sset_+^{N_x}$ and $R \in \Sset_{++}^{N_u}$.

\subsection{Linear Disturbance Feedback Controllers}
 In this paper, we consider the design of  strictly causal  linear disturbance feedback controllers\footnote{It is straightforward to extend the results of this paper to allow for the optimization over \emph{affine} disturbance feedback policies. We forgo this more general treatment to streamline the presentation in this paper.}  of the form
\begin{align} \label{eq:DFcontroller}
u_t =  \sum_{k=0}^{t} K_{t,k} w_{k-1},  \quad \forall \,  t=0, \dots, T-1.
\end{align}
Here, we have used the notational convention $w_{-1} := x_0$. The  controller \eqref{eq:DFcontroller} can be  expressed as a linear mapping from the disturbance trajectory to the input trajectory given by
\begin{align}
u = Kw, \quad K \in \Kcal,
\end{align}
where $\Kcal  \subseteq \Rset^{N_u \times N_x}$   denotes the space of all block lower triangular matrices that  correspond to the disturbance feedback parameterization specified  in \eqref{eq:DFcontroller}.   

It is important to note that the  family of strictly causal linear disturbance feedback controllers is equivalent to the family of  causal linear state feedback controllers \cite{goulart2006optimization}.
In particular, given a  strictly causal disturbance feedback controller $K \in \Kcal$, the  change of variables\footnote{This invertible nonlinear transformation is related to the well-known Youla parameterization in linear systems \cite{youla1976modern}. The matrix $G$ is invertible since it is a lower triangular matrix with ones along its diagonal, and the matrix $I + KG^{-1}F$ is invertible since $KG^{-1}F$ is a  block strictly lower triangular matrix.}  $L := (I + KG^{-1}F)^{-1} K G^{-1}$ induces an equivalent causal state feedback controller $L$  that satisfies $Lx = Kw$ for all $w \in \Rset^{N_x}$ \cite{skaf2010design}.
However, from an optimization standpoint, the  disturbance feedback parameterization is preferred over the state feedback parameterization, because the cost function $J(Kw,\, w) $  resulting from the disturbance feedback parameterization is guaranteed to be convex with respect to the controller  $K$ \cite{goulart2006optimization}. 

\subsection{Minimax Regret Optimal Control}
We seek controllers belonging to the family $\Kcal$ that minimize the worst-case expected regret across all probability distributions in the given ambiguity set $\Pcal$.  The \emph{regret} incurred by a causal controller is defined as the difference between the cost it incurs and the cost incurred by the optimal noncausal controller (which knows the full disturbance trajectory at the outset). Formally, the regret incurred by a causal controller $K \in \Kcal$ in response to a disturbance $w$ is defined as
\begin{align} \label{eq:regret}
R(K, \, w) : = J(Kw, \, w)  \, -  \, J(u^\star(w), \, w),
\end{align}
where  $u^\star \hspace{-.05in} : \hspace{-.05in} \Rset^{N_x} \rightarrow \Rset^{N_u}$ denotes the  \emph{optimal noncausal controller}, which is defined as
\begin{align} \label{eq:opt_oracle}
u^\star(w) := \argmin_{u \in \Rset^{N_u}} J(u, \, w).
\end{align}

With these definitions in hand, the  \emph{minimax regret optimal control} (MROC)  problem can be formulated as
\begin{align} \label{eq:MRO_original}
\inf_{ K \in \Kcal} \sup_{P \in  \Pcal}   \mathds{E}_{P}\left[ R(K, \, w) \right].
\end{align}
To solve the MROC problem, an explicit characterization of the optimal noncausal controller cost  is needed. It is well known that the optimal noncausal controller exists, is unique, and  is linear in the disturbance trajectory $w$ (among all possible nonlinear controllers) \cite{hassibi1999indefinite, martin2022safe}. To see why this is true, note that $J(u, \, w)$ is a strictly convex quadratic function of $u$. It follows that the unique optimal noncausal controller can be obtained by solving the  optimality condition $\nabla_u J(u, \, w) = 0$, which yields
\begin{align*}
    u^\star(w) = K^\star w,
\end{align*}
where $K^\star :=   -(R + F^\top Q F)^{-1}F^\top Q G$.  
Using the above identity, the optimal noncausal controller cost can be expressed as
\begin{align*}
    J(K^\star w, \, w) = w^\top G^\top ( Q -   QFD^{-1}F^\top Q) G w,
\end{align*}
where $D :=  R + F^\top Q F$ is a positive definite matrix. Furthermore, by substituting \eqref{eq:state_traj} into \eqref{eq:cost}, and completing the square in \eqref{eq:cost} with respect to $u$,  the cost function $J(u, \, w)$  can be rewritten as
\begin{align} \label{eq:completing_square}
J(u, w)   = (u - K^\star w)^\top D (u - K^\star w) \, + \, J(K^\star w, \, w).
\end{align}
Notice that the first term  in \eqref{eq:completing_square} represents the regret incurred by the control trajectory $u$ in response to the disturbance $w$. Using the identity in \eqref{eq:completing_square}, the MROC problem can be recast as
\begin{align} \label{eq:MRO_reformulation}
\inf_{ K \in \Kcal} \sup_{P \in  \Pcal}   \mathds{E}_{P}\left[ w^\top(K - K^\star)^\top D (K - K^\star)w \right].
\end{align}
From \eqref{eq:MRO_reformulation}, it can be seen that the expected regret incurred by a controller $K \in \mathcal{K}$ under a distribution $P \in \mathcal{P}$ only depends on the distribution through its second moment.

\subsection{Distributionally Robust  Optimal Control}
In Sections \ref{sec:example} and \ref{sec:experiments}, we compare the \emph{minimax regret optimal} (MRO) controllers proposed in this paper with \emph{distributionally robust optimal} (DRO) controllers, which are defined as solutions to the following optimization problem:
\begin{align}  \label{eq:DRO_original}
\inf_{K \in \Kcal}   \sup_{P \in  \Pcal}   \mathds{E}_{P}\left[ J(Kw, \, w) \right].
\end{align}
DRO controllers minimize the worst-case expected cost across all distributions in a given ambiguity set $\Pcal$.

\section{Motivating Example} \label{sec:example}
Before delving into the main results of this paper, we first analyze a simple system that sheds light on the behavior of MRO  and DRO controllers and the nature of the adversarial distributions they hedge against. 

\subsection{System Description}
Consider a  one-dimensional ($n=1$) system that operates for a single time period ($T=1$):
\begin{align*}
    x_1 = x_0 + u_0 + w_0.
\end{align*}
In this setting, there is a single control input $u_0$, and the disturbance trajectory $ w= (x_0, w_0)$ is comprised of the initial state $x_0$ and initial disturbance $w_0$, which are assumed to have \emph{known} marginal distributions with zero mean and unit variance. The correlation coefficient between the initial state and disturbance is assumed to be \emph{unknown}. Given these assumptions, the ambiguity set of distributions can be expressed as
\begin{align*}
    \Pcal :=  \left\{P \in \Mcal(\Rset^2) \, \middle\vert  \begin{array}{l}  \mathds{E}_{P}[w] = 0,  \, \mathds{E}_{P}[ww^\top] = \begin{bmatrix}
        1 & \rho \\ \rho & 1
    \end{bmatrix}
      \\   \rho \in [-1,1] \, \text{ for } \, w \sim P   \end{array} \hspace{-.05in}  \right\}, 
\end{align*}
where $\rho$ denotes the correlation coefficient between $x_0$ and $w_0$.
We consider a cost function that penalizes the terminal state and control input according to
\begin{align*}
    J(u_0,w) \, =  \, x_1^2 \, + \, c u_0^2,
\end{align*}
where  $c \ge 1$ is a given parameter. 

{
\setlength{\tabcolsep}{6pt}
\newcommand{\ra}[1]{\renewcommand{\arraystretch}{#1}}
\begin{table}[t] \centering
    \ra{1.6}
    \begin{tabular}{lll} 
         \toprule
         \multicolumn{1}{l}{\bf Method}  & \multicolumn{1}{l}{\bf Control Policy} & \multicolumn{1}{l}{\bf Expected Cost}  \\ \midrule
         Optimal Noncausal   & $ -\left(\dfrac{1}{1+c}\right)\left(x_0+w_0\right)$ & $ J^\star$  \vspace{.145in} \\ 
         Optimal Causal  & $ -\left(\dfrac{1+\rho}{1+c}\right) x_0$ &  $J^\star   \, + \, \dfrac{1-\rho^2}{\hspace{-.06in} 1+c}$ \vspace{.145in}\\ 
         MRO & $ -\left(\dfrac{1}{1+c}\right) x_0$ & $ J^\star   \, + \, \dfrac{1}{1+c}$ \vspace{.145in}\\ 
         DRO & $ -\left(\dfrac{2}{1+c} \right) x_0$ & $J^\star   \, + \, 2 \left(\dfrac{1-\rho}{1+c} \right)$ \vspace{.01in}\\ 
         \bottomrule
    \end{tabular}
    \caption{Comparison of different control design methods applied to the specific system described in Section \ref{sec:example} for $c \geq 1$. The expected cost incurred by the optimal noncausal policy is given by   $J^\star :=  2c (1+\rho)/(1+c)$.} 
    \label{tab:Ks_costs}
\end{table}
}

We examine four different controllers: (i) the \emph{optimal noncausal controller}  defined in \eqref{eq:opt_oracle},  (ii) the \emph{MRO controller} defined in \eqref{eq:MRO_original}, (iii) the \emph{DRO controller} defined in \eqref{eq:DRO_original}, and  (iv) the \emph{optimal causal controller}, which minimizes the expected cost given complete knowledge of the underlying distribution $P\in\Pcal$. For each of the causal control design methods (ii)-(iv), we restrict our attention to linear policies of the form:
\begin{align*}
    u_0 \, = \, -kx_0, 
\end{align*}
where $k \geq 0$ denotes the control gain.  Closed-form expressions for the optimal controllers and their expected costs  are provided in Table \ref{tab:Ks_costs}. Due to space limitations, the derivation of these expressions is left as an exercise for the reader. 

\begin{figure}[ht]
    \centering
    \includegraphics[width=.9\columnwidth]{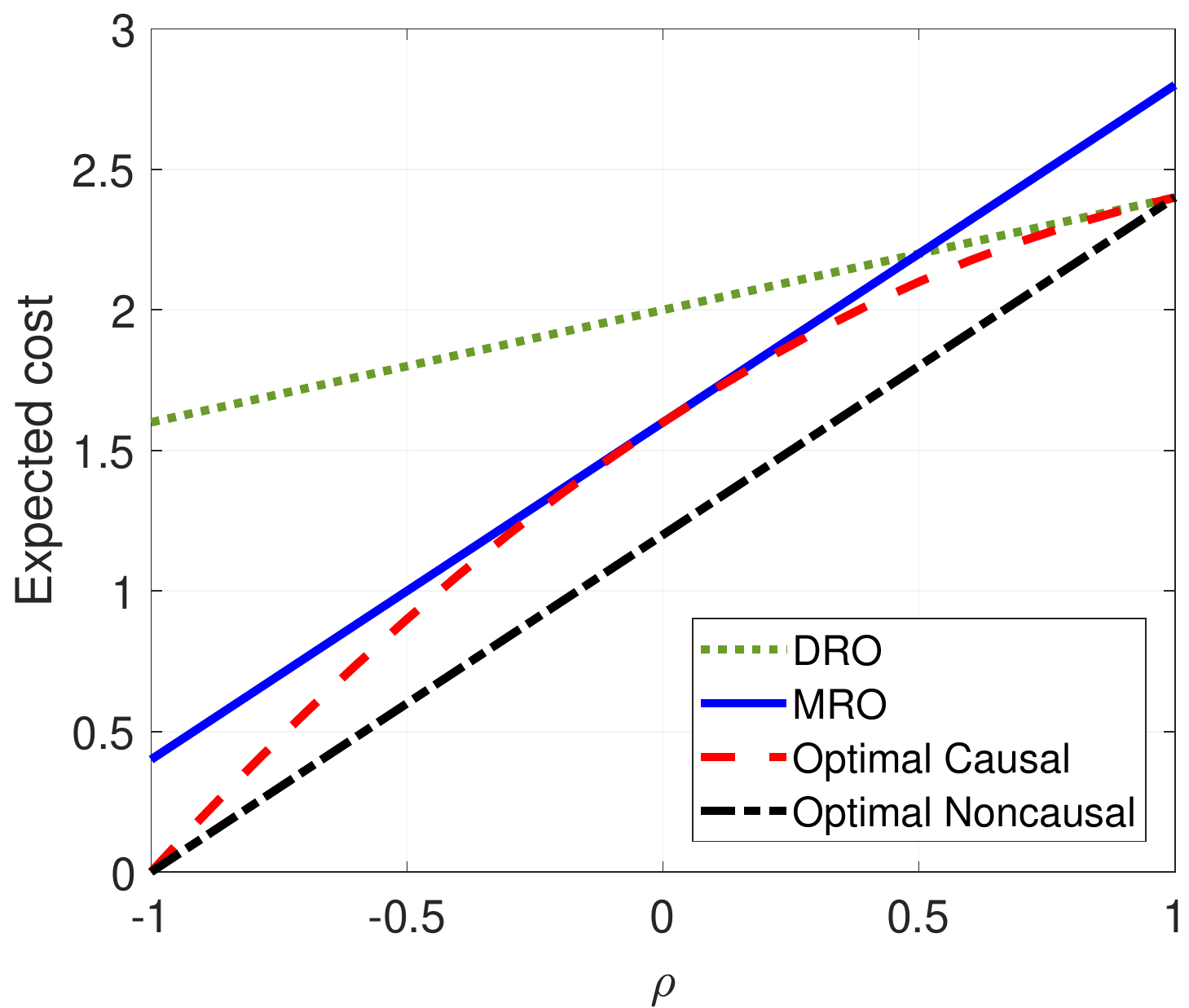}
    \caption{Expected cost incurred by different controllers as a function of the correlation coefficient $\rho$ for $c=3/2$.}
    \label{fig:costs}
\end{figure}

\subsection{Discussion}
In Fig. \ref{fig:costs}, we plot the expected cost of each controller as a function of the correlation coefficient $\rho$. The expected cost of the DRO controller is greatest at $\rho = 1$, i.e., when the initial state $x_0$ and the disturbance $w_0$ are perfectly positively correlated. This is unsurprising since heightened correlation between the initial state and the disturbance requires greater control effort to minimize the expected cost. 
Unlike the DRO controller, which minimizes the expected cost against the worst-case distribution in the ambiguity set, the MRO controller is found to strike a balance in compensating for both ``good'' $(\rho=-1)$ and ``bad'' ($\rho =1)$ distributions in the ambiguity set, i.e., distributions that result in lower and higher expected costs, respectively. In fact, the MRO controller is equivalent to the optimal causal controller corresponding to the correlation coefficient $\rho =0$. Thus, although the MRO controller incurs a greater expected cost than the DRO controller in the worst case, it strictly outperforms the DRO controller for all distributions in the ambiguity set with correlation coefficients $\rho \in [-1, \, 1/2)$.  
Moreover, the MRO controller achieves a constant expected regret of $1/(1+c)$ over the ambiguity set, while the DRO controller exhibits an expected regret of  $4/(1+c)$ for distributions with $\rho= -1$.

\section{Tractable Convex Reformulation} \label{sec:reformulation}

A primary challenge in solving the MROC problem \eqref{eq:MRO_reformulation} is that the inner maximization involves a supremum over an ambiguity set of distributions that may be nonconvex and infinite dimensional. To address this challenge, we consider ambiguity sets that are defined  as the set of all distributions that are \emph{close} to a nominal distribution with respect to the Wasserstein distance. The Wasserstein distance between two distributions can be interpreted as the minimum ``work'' or transportation cost required to move the probability mass from one distribution to the other. 
By defining the ambiguity set in this manner, we are able to draw on a well-established duality theory for optimal transport problems to reformulate the inner maximization in \eqref{eq:MRO_reformulation} as a tractable finite-dimensional convex optimization problem \cite{blanchet2019quantifying, gao2022distributionally}. As one of our main contributions in Theorem \ref{thm:sdp of MROC}, we  provide an exact reformulation of the MROC problem \eqref{eq:MRO_reformulation} as a SDP. In Theorem \ref{thm:worst_dist}, we also provide a structural characterization of a worst-case distribution achieving the supremum in the inner maximization in \eqref{eq:MRO_reformulation}. 

\subsection{Strong Duality for Worst-Case Expectation Problems}
Before presenting our main results, it is necessary to introduce some additional notation and define the  type-2 Wasserstein distance between probability measures. 

\begin{defn}[Type-2 Wasserstein Distance] \rm \label{defn:wass_dist} The \emph{type-2 Wasserstein distance} between two distributions $P_1, P_2 \in \Mcal(\Rset^{N_x})$  is defined as 
\begin{align*}
    W_2(P_1, \, P_2)^2 := \hspace{-.075in}\inf_{\pi \in \Pi(P_1, \, P_2)}  \int_{\Rset^{N_x} \times \Rset^{N_x}} \hspace{-.05in} \| z_1 - z_2 \|^2 \pi(d z_1, \, d z_2),
\end{align*}
 where $\Pi(P_1, \, P_2)$ denotes the set of all joint distributions  in $\Mcal(\Rset^{N_x} \times \Rset^{N_x})$ with marginal distributions $P_1$ and $P_2$.
\end{defn}
Given a \emph{nominal distribution} $P_0 \in \Mcal( \Rset^{N_x})$, we define the ambiguity set $\Pcal$ as the set of all distributions 
whose type-2 Wasserstein distance to $P_0$ is at most $r \geq 0$, i.e., 
\begin{align} \label{eq:amb_set}
    \Pcal := \{ P \in \Mcal( \Rset^{N_x}) \, | \, W_2(P, P_0) \leq r \}.
\end{align}
The \emph{radius of the ambiguity set}, $r$, reflects the degree of uncertainty surrounding the accuracy of the nominal distribution. For example, when the central distribution is estimated from independent and identically distributed (i.i.d.) data, the radius of the ambiguity set can be selected to ensure that the ambiguity set contains the data-generating distribution with a desired confidence level, e.g., using the techniques described in \cite{mohajerin2018data}. 
If the radius of the ambiguity set is decreased to zero, then the ambiguity set collapses to a singleton set that only contains the nominal distribution, and the MROC problem \eqref{eq:MRO_original}  reduces to a conventional (ambiguity-free) stochastic optimal control problem given by $\inf_{K \in \Kcal} \mathds{E}_{P_0}[J(Kw, \, w)]$. 

To solve the MROC problem \eqref{eq:MRO_reformulation}, one must contend with the inner maximization (worst-case expectation) over the type-2 Wasserstein ball of distributions. Recently, it has been shown that strong duality holds for such infinite-dimensional maximization problems for a large family of objective functions \cite{blanchet2019quantifying, gao2022distributionally, mohajerin2018data}. 
In particular, the following known result from the literature provides conditions under which strong duality is guaranteed to hold, along with a useful characterization of the dual problem as a one-dimensional convex minimization problem. 

\begin{thm}[Strong Duality, \cite{gao2022distributionally}] \label{thm:strong_duality} \rm Let $r>0$ and consider a family of worst-case expectation problems given by
\begin{align*}
  \sup_{P \in \Pcal} \mathds{E}_P[f(w)],  
\end{align*}
where $f: \Rset^{N_x} \rightarrow \Rset$ is a Borel-measurable function such that $\mathds{E}_{P_0}[|f(w)|] < \infty$. 
Then, strong duality holds, and
\begin{align} \label{eq:strong_duality}
   \sup_{P \in \Pcal} \mathds{E}_P[f(w)]   = \inf_{\gamma \geq 0} \left\{\gamma r^2  - \mathds{E}_{P_0}\left[  \phi(\gamma, \, w)\right]   \right\},
\end{align}
where the function $\phi: \Rset \times \R^{N_x} \rightarrow \Rset \cup \{-\infty\}$ is defined as
\begin{align*}
  \phi(\gamma, \, w)  := \inf_{z \in \Rset^{N_x}}  \{ \gamma \| z - w \|^2 - f(z) \}.
\end{align*}
Additionally, if $\inf\{\gamma \geq 0 \, | \,  \mathds{E}_{P_0}[\phi(\gamma, \, w)]  > - \infty \} < \infty$, then the primal and dual optimal values in \eqref{eq:strong_duality} are finite.
\end{thm}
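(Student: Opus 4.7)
The plan is to establish \eqref{eq:strong_duality} by Lagrangian duality applied to the Wasserstein distance constraint, followed by an interchange of the supremum over couplings with the integral against the nominal distribution. First, I would recast the primal worst-case expectation problem as an optimization over joint probability measures: since each $P\in\Pcal$ is certified by the existence of a coupling $\pi\in\Pi(P_0,P)$ with $\int\|z-w\|^2\,\pi(dw,dz)\leq r^2$, the primal becomes
\[
\sup_{\pi}\,\int f(z)\,\pi(dw,dz)\quad\text{s.t.}\quad\int\|z-w\|^2\,\pi(dw,dz)\leq r^2,
\]
where the supremum ranges over $\pi\in\Mcal(\Rset^{N_x}\!\times\Rset^{N_x})$ whose first marginal is $P_0$.

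Next, I would dualize the quadratic transportation-budget constraint with a multiplier $\gamma\geq 0$, which immediately furnishes the weak-duality upper bound
\[
\inf_{\gamma\geq 0}\left\{\gamma r^2+\sup_{\pi}\int\!\left[f(z)-\gamma\|z-w\|^2\right]\pi(dw,dz)\right\}.
\]
Disintegrating $\pi(dw,dz)=P_0(dw)\,K(w,dz)$ for a Markov kernel $K$, the inner supremum decouples pointwise in $w$: maximizing over the conditional $K(w,\cdot)$ concentrates mass at a maximizer of $z\mapsto f(z)-\gamma\|z-w\|^2$ (when one exists), which gives
\begin{align*}
\sup_{\pi}\int\!\left[f(z)-\gamma\|z-w\|^2\right]\pi(dw,dz) &=\int\sup_{z}\{f(z)-\gamma\|z-w\|^2\}\,P_0(dw)\\
&= -\mathds{E}_{P_0}[\phi(\gamma,w)].
\end{align*}
Justifying this interchange is the first technical subtlety and requires a measurable selection argument (via Aumann's theorem or an $\varepsilon$-optimal selector construction when no exact pointwise maximizer exists), using the Borel measurability of $f$ to produce a measurable family of near-optimizers that can be stitched into an admissible kernel.

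The hard part is closing the duality gap, since the steps so far only deliver weak duality. My plan is to run a Rockafellar-style perturbation argument: define the primal value function $v(\delta):=\sup\{\mathds{E}_P[f(w)] : W_2(P,P_0)^2\leq\delta\}$ on $\delta\geq 0$, and show that $v$ is concave, proper, and upper semicontinuous at $\delta=r^2$, so that $v$ equals its biconjugate there and the optimal multiplier realizes the dual value. Concavity is inherited from the convexity of the Wasserstein ball parametrization together with the linearity of $P\mapsto\mathds{E}_P[f(w)]$; upper semicontinuity is the delicate step and relies on a tightness argument for near-optimal couplings in the quadratic Wasserstein topology, with the hypothesis $\mathds{E}_{P_0}[|f(w)|]<\infty$ and the quadratic transportation cost jointly providing enough growth control to accommodate unbounded $f$. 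Finally, the finiteness conclusion follows by exhibiting a dual-feasible $\gamma$ with finite objective: under $\inf\{\gamma\geq 0 : \mathds{E}_{P_0}[\phi(\gamma,w)]>-\infty\}<\infty$, any such $\gamma$ bounds the dual value from above, and weak duality bounds the primal by the dual, yielding finiteness on both sides of \eqref{eq:strong_duality}.
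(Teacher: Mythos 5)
The paper itself offers no proof of Theorem~\ref{thm:strong_duality}: it is quoted from \cite{gao2022distributionally} and used as a black box, so there is no in-paper argument to compare against. Judged on its own terms, your outline reconstructs the standard proof strategy from that literature (Gao--Kleywegt, Blanchet--Murthy): lift the problem to couplings with first marginal $P_0$, Lagrangian-dualize the transport-budget constraint to obtain weak duality, disintegrate the coupling and interchange the supremum with the $P_0$-integral to identify the dual objective as $\gamma r^2 - \mathds{E}_{P_0}[\phi(\gamma,w)]$, and close the gap via a perturbation argument on the value function $v(\delta)$. That skeleton is correct, and each step is sound. Two remarks on where the weight actually falls. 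First, the step you single out as delicate --- upper semicontinuity of $v$ at $r^2$ --- essentially comes for free: $v$ is concave and nondecreasing on $[0,\infty)$ and finite at $\delta=0$ since $\mathds{E}_{P_0}[|f(w)|]<\infty$; moreover, if $v(r^2)<\infty$ then concavity forces $v$ to be finite on all of $(0,\infty)$ (otherwise $v(r^2)\ge \tfrac12 v(r^2-\epsilon)+\tfrac12 v(r^2+\epsilon)=+\infty$), so $r^2$ lies in the interior of the effective domain, where a finite concave function is continuous with nonempty superdifferential; this yields the optimal multiplier $\gamma^\star\ge 0$ and zero gap, while the case $v(r^2)=+\infty$ is disposed of by weak duality alone. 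No tightness argument is needed. Second, the genuinely hard point is the one you mention only in passing: for merely Borel-measurable $f$, the map $w\mapsto\phi(\gamma,w)$ is an uncountable infimum and in general is only universally measurable, and the $\varepsilon$-optimal selectors needed to assemble an admissible kernel come from the Jankov--von Neumann measurable-projection machinery rather than Aumann's theorem in its textbook form; a complete argument must also fix conventions for $\mathds{E}_{P_0}[\phi(\gamma,w)]$ when the integrand takes the value $-\infty$. None of this is a fatal flaw, but these are the places where your sketch would need real work to become a proof, which is presumably why the authors import the result rather than reprove it.
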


Building on Theorem \ref{thm:strong_duality}, we provide conditions which enable the reformulation of worst-case expectation problems involving general quadratic objective functions as SDPs. We rely on the following assumption. 

\begin{ass}[Nominal Distribution] \label{ass:AC} \rm The nominal distribution $P_0$ is absolutely continuous with respect to the Lebesgue measure on $\Rset^{N_x}$.
\end{ass}

\begin{thm}[Strong Duality for Quadratic Objectives]\label{thm:strong_duality_quadratic} \rm Let $r >0$ and Assumption \ref{ass:AC} hold. Consider a family of worst-case expectation problems given by
\begin{align} \label{eq:quadratic}
  \sup_{P \in \Pcal} \mathds{E}_P[w^\top C w],  
\end{align}
    where the matrix $C \in \Sset^{N_x}$ is assumed to satisfy $\lambda_{\rm max}(C) \neq 0$.  Then, the optimal value of \eqref{eq:quadratic} is finite and equal to the optimal value of the following convex program:
    \begin{align} \label{eq:quad_SDP_reform}
    \inf_{\gamma \geq 0} \{ \gamma(r^2 - \trace{M_0}) + \gamma^2 \trace{M_0(\gamma I - C)^{-1}}    \, | \,  \gamma I \succ C\}, 
    \end{align}
    where $M_0 := \mathds{E}_{P_0}[w w ^\top]$.
\end{thm}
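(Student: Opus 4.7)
The plan is to specialize the general strong duality result in Theorem \ref{thm:strong_duality} to the quadratic objective $f(w)=w^{\top}Cw$, and then to evaluate the resulting dual function in closed form. First I would verify the hypothesis of Theorem \ref{thm:strong_duality}: since $P_0\in\Mcal(\Rset^{N_x})$ has a finite second moment and $|f(w)|\le\lambda_{\max}(|C|)\|w\|^2$, we have $\mathds{E}_{P_0}[|f(w)|]<\infty$. This yields
\begin{align*}
\sup_{P\in\Pcal}\mathds{E}_P[w^{\top}Cw] \,=\, \inf_{\gamma\ge 0}\Big\{\gamma r^2 - \mathds{E}_{P_0}\big[\phi(\gamma,w)\big]\Big\},
\end{align*}
with $\phi(\gamma,w)=\inf_{z}\{z^{\top}(\gamma I-C)z-2\gamma w^{\top}z+\gamma\|w\|^2\}$.

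Next I would analyze $\phi(\gamma,w)$ by cases in $\gamma\ge 0$. \emph{(a)} If $\gamma I\not\succeq C$, the quadratic form in $z$ is unbounded below so $\phi(\gamma,w)=-\infty$ for every $w$. \emph{(b)} If $\gamma I\succeq C$ but $\gamma I-C$ is singular, moving $z$ along the null space of $\gamma I-C$ drives the linear term to $-\infty$ unless $w$ lies in $\mathrm{range}(\gamma I-C)$, which is a proper linear subspace of $\Rset^{N_x}$ and therefore has Lebesgue measure zero; Assumption \ref{ass:AC} then forces $\phi(\gamma,w)=-\infty$ for $P_0$-a.e.\ $w$, so $\mathds{E}_{P_0}[\phi(\gamma,w)]=-\infty$. \emph{(c)} If $\gamma I\succ C$, the minimization is strictly convex with unique minimizer $z^\star=\gamma(\gamma I-C)^{-1}w$; substituting and using the elementary identity $I-\gamma(\gamma I-C)^{-1}=-C(\gamma I-C)^{-1}$ gives $\phi(\gamma,w)=-\gamma\,w^{\top}C(\gamma I-C)^{-1}w$.

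I would then take expectations in case (c), obtaining $\mathds{E}_{P_0}[\phi(\gamma,w)] = -\gamma\,\tr(M_0 C(\gamma I-C)^{-1})$. Applying once more the identity $C(\gamma I-C)^{-1}=-I+\gamma(\gamma I-C)^{-1}$ rewrites this as $\gamma\,\tr(M_0)-\gamma^2\,\tr(M_0(\gamma I-C)^{-1})$, so that
\begin{align*}
\gamma r^2-\mathds{E}_{P_0}[\phi(\gamma,w)] \,=\, \gamma\bigl(r^2-\tr(M_0)\bigr)+\gamma^2\tr\!\bigl(M_0(\gamma I-C)^{-1}\bigr).
\end{align*}
Combining with cases (a)--(b), the dual reduces to the infimum in \eqref{eq:quad_SDP_reform} restricted to the feasible set $\{\gamma\ge 0 : \gamma I\succ C\}$, which is exactly the claimed SDP.

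Finally, for finiteness I would invoke the second statement of Theorem \ref{thm:strong_duality}: the case analysis shows $\inf\{\gamma\ge 0 : \mathds{E}_{P_0}[\phi(\gamma,w)]>-\infty\}=\max(\lambda_{\max}(C),0)$, which is finite (the hypothesis $\lambda_{\max}(C)\ne 0$ rules out a degenerate boundary case), so both primal and dual optimal values are finite and coincide. The main technical obstacle is case (b): at the boundary $\gamma I-C\succeq 0$ with $\gamma I-C$ singular, the naive computation of $\phi(\gamma,w)$ via $(\gamma I-C)^{-1}$ breaks down, and one must argue that the set of $w$ for which $\phi$ remains finite is a measure-zero subspace and is therefore annihilated by $P_0$ under Assumption \ref{ass:AC}. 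The remaining steps are direct calculation.
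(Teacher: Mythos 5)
Your proposal is correct and follows essentially the same route as the paper's proof: specialize Theorem \ref{thm:strong_duality} to $f(w)=w^\top Cw$, show via a null-space/measure-zero argument under Assumption \ref{ass:AC} that $\mathds{E}_{P_0}[\phi(\gamma,w)]>-\infty$ if and only if $\gamma I\succ C$, and evaluate the dual objective in closed form to obtain \eqref{eq:quad_SDP_reform}. The one point to tighten is case (b): there, singularity of $\gamma I-C$ together with $\gamma I\succeq C$ and $\gamma\ge 0$ forces $\gamma=\lambda_{\rm max}(C)>0$, which is precisely where the hypothesis $\lambda_{\rm max}(C)\neq 0$ enters (if $\gamma=0$ the linear term vanishes and $\phi$ remains finite), rather than in the finiteness of $\max(\lambda_{\rm max}(C),0)$ as your closing paragraph suggests.
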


\begin{proof} For any matrix $C \in \Sset^{N_x}$, the  function $z \mapsto z^\top C z$ is Borel measurable, and satisfies $\mathds{E}_{P_0}[|w^\top C w|] < \infty $ since the nominal distribution $P_0$ has a finite second moment by assumption. Then, by Theorem \ref{thm:strong_duality}, it holds that
\begin{align} \label{eq:quad_primal_dual}
     \sup_{P \in \Pcal} \mathds{E}_P[w^\top C w] \, = \,  \inf_{\gamma \geq 0} \left\{\gamma r^2  - \mathds{E}_{P_0}\left[  \phi(\gamma, \, w)\right]   \right\},
\end{align}
where 
\begin{align*}
    \phi(\gamma, \,  w) = \hspace{-.075in} \inf_{z \in \Rset^{N_x}}  \left\{  z^\top (\gamma I - C) z  - 2 \gamma w^\top z \right\}  + \gamma \|w\|^2.
\end{align*}
Also, by Theorem \ref{thm:strong_duality}, the primal and dual optimal values  in \eqref{eq:quad_primal_dual} are finite if $\inf\{\gamma \geq 0 \, | \,  \mathds{E}_{P_0}[\phi(\gamma, \, w)]  > - \infty \} < \infty$. Under the conditions stated in Theorem \ref{thm:strong_duality_quadratic}, we shall prove that
 $ \mathds{E}_{P_0}[\phi(\gamma, \, w)]  > - \infty $ if and only if $\gamma I - C \succ 0$. The implication in the ``if'' direction  is clear. 

We prove the implication in the ``only if'' direction using contraposition, i.e., we shall prove that $\mathds{E}_{P_0}[\phi(\gamma, \, w)]  = - \infty $ if $\gamma I - C \nsucc 0 $. To streamline notation, let $C_\gamma := \gamma I - C $. Additionally, let $v \in \Rset^{N_x}$ denote an eigenvector of the matrix $C_\gamma$ associated with the smallest eigenvalue $\lambda_{\rm min}(C_\gamma)$.

Note that $C_\gamma \nsucc 0 $ is equivalent to  $\lambda_{\rm min}(C_\gamma ) \leq 0$. If $\lambda_{\rm min}(C_\gamma ) < 0$, then $\inf_{z \in \Rset^{N_x}} \left\{  z^\top C_\gamma z  - 2 \gamma w^\top z \right\} = -\infty$ for any $w \in \Rset^{N_x}$, which implies that $\mathds{E}_{P_0}[\phi(\gamma, \, w)]  = - \infty $. 
If $\lambda_{\rm min}(C_\gamma ) = 0$, then
\begingroup
\allowdisplaybreaks
\begin{align*}
    \inf_{z \in \Rset^{N_x}}  \left\{  z^\top C_\gamma  z  \right. &  \left. - 2 \gamma w^\top z  \right\}  \\ 
    & \leq \inf_{\theta \in \Rset} \left\{  (\theta v)^\top C_\gamma (\theta v)  - 2 \gamma w^\top (\theta v) \right\} \\
     & = \inf_{\theta \in \Rset} \left\{    -2 \gamma   w^\top (\theta v) \right\}\\
    & = \inf_{\theta \in \Rset} \left\{    -2 \lambda_{\rm max}(C) w^\top (\theta v) \right\}\\
    & = -\infty \ P_0\text{-almost surely.}
\end{align*}
\endgroup
The first equality follows from the assumption that $v$ is an eigenvector of $C_\gamma$ associated with the eigenvalue $\lambda_{\rm min}(C_\gamma) = 0$.  The second equality  follows from $\lambda_{\rm min}(C_\gamma) = 0$, which is equivalent to $\gamma = \lambda_{\rm max}(C)$. The final equality follows from the assumption that $\lambda_{\rm max}(C) \neq 0$ and Assumption \ref{ass:AC} (which implies that $w^\top v \neq 0$ $P_0$-almost surely).    This proves that   $ \mathds{E}_{P_0}[\phi(\gamma, \, w)]  > - \infty $ if and only if $C_\gamma  \succ 0$. 
It follows that 
\begin{align*}
\inf\{\gamma \geq 0 \, | \,  \mathds{E}_{P_0}[\phi(\gamma, \, w)]  > - \infty \} & = \inf\{\gamma \geq 0 \, | \, C_\gamma \succ 0 \}\\
& = \max\{ \lambda_{\rm max}(C), \, 0 \} \\
& < \infty,
\end{align*}
proving finiteness of the primal and dual optimal values in \eqref{eq:quad_primal_dual}. 

We complete the proof by showing that the right-hand side of \eqref{eq:quad_primal_dual} is equal to \eqref{eq:quad_SDP_reform}.  From  previous arguments, it follows that
\begin{multline}
\inf_{\gamma \geq 0} \left\{\gamma r^2  - \mathds{E}_{P_0}\left[  \phi(\gamma, \, w)\right]   \right\} \\
= 
\inf_{\gamma \geq 0} \left\{\gamma r^2  - \mathds{E}_{P_0}\left[  \phi(\gamma, \, w)\right]   |\, C_\gamma \succ 0\right\}, \nonumber
\end{multline}
which leads to \eqref{eq:quad_SDP_reform} as 
$\inf_{z \in \Rset^{N_x}}  \bigl\{  z^\top C_\gamma z  \!-\! 2 \gamma w^\top z \bigr\} = -\gamma^2 w^\top C_\gamma^{-1} w$ under the condition $C_\gamma \succ 0$. \end{proof}

Together with Assumption \ref{ass:AC}, the condition $\lambda_{\rm max}(C)\neq0$ ensures that the dual optimal value \eqref{eq:quad_SDP_reform} is finite. While this condition permits a variety of  quadratic objective functions, including definite and indefinite quadratic functions, it does exclude negative semidefinite quadratic functions with $\lambda_{\rm max}(C) = 0$. 

We also note that Theorem \ref{thm:strong_duality_quadratic} extends a number of related results in the literature \cite{nguyen2022distributionally, kuhn2019wasserstein} by expanding the family of nominal distributions under which the worst-case expectation problem \eqref{eq:quadratic} is guaranteed to admit an equivalent convex reformulation as \eqref{eq:quad_SDP_reform}. 
In particular, the  convex reformulation provided in \cite[Theorem 16]{kuhn2019wasserstein} relies on the assumption that the nominal distribution is a (possibly degenerate) elliptical distribution, while Assumption \ref{ass:AC} only requires that the nominal distribution be absolutely continuous with respect to the Lebesgue measure on $\Rset^{N_x}$. While the family of probability distributions satisfying Assumption \ref{ass:AC} is quite broad, it does rule out  nominal distributions supported on  lower dimensional manifolds (e.g.,  degenerate distributions), and distributions supported on finite sets. For a comprehensive treatment of  nominal distributions supported on finite sets, we refer the reader to \cite{mohajerin2018data}, which provides tractable convex reformulations for an array of worst-case expectation problems over type-1 Wasserstein balls centered at empirical distributions. 

\subsection{Worst-Case Distribution Characterization}

Using the dual reformulation of the quadratic worst-case expectation problem \eqref{eq:quadratic} provided in Theorem \ref{thm:strong_duality_quadratic}, 
we now characterize a worst-case distribution which attains the optimal value of the worst-case expectation problem \eqref{eq:quadratic}. 

\begin{thm}[Worst-Case Distribution] \rm \label{thm:worst_dist}
    Let the assumptions of Theorem \ref{thm:strong_duality_quadratic} hold and let $\gamma^\star\ge 0$ satisfy $\gamma^\star I \succ C$ and be a solution to the algebraic equation
    \begin{align} \label{eq:opt_cond}
         {\rm Tr} \big( \big( \gamma(\gamma I- C)^{-1} - I \big)^2 M_0 \big) = r^2.
    \end{align}
   Let $P^\star \in \Mcal(\Rset^{N_x})$ denote the distribution of 
    \begin{align} \label{eq:w_star}
        w^\star := \gamma^\star (\gamma^\star I - C)^{-1} w, 
    \end{align}
    where $w \sim P_0$. Then, the distribution $P^\star$ belongs to the ambiguity set $\Pcal$ and it attains the optimal value of the worst-case expectation problem \eqref{eq:quadratic}. 
\end{thm}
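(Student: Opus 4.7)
The plan is to verify two claims about $P^\star$: first, that it lies in the ambiguity set $\Pcal$, and second, that it attains the worst-case expectation in \eqref{eq:quadratic}. My strategy is to bypass any explicit proof that $\gamma^\star$ optimizes the dual problem \eqref{eq:quad_SDP_reform}; instead, feasibility of $P^\star$ together with weak duality will force the required equality by a sandwich argument.

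To establish $P^\star \in \Pcal$, I would use the coupling induced by the deterministic transport map \eqref{eq:w_star}. The joint distribution of $(w, w^\star)$ with $w \sim P_0$ has marginals $P_0$ and $P^\star$ by construction, so it is admissible in the infimum defining $W_2(P_0, P^\star)$. Setting $B := \gamma^\star(\gamma^\star I - C)^{-1} - I$, which is symmetric because $\gamma^\star I \succ C$, I have $\|w^\star - w\|^2 = w^\top B^2 w$ pointwise. Taking expectations and invoking \eqref{eq:opt_cond} yields $\mathds{E}_{P_0}[\|w^\star - w\|^2] = \trace{B^2 M_0} = r^2$, so $W_2(P_0, P^\star)^2 \le r^2$ as required.

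For the attainment claim, the key observation is that $w^\star$ is precisely the unconstrained minimizer of the quadratic in $z$ inside the definition of $\phi(\gamma^\star, w)$: since $\gamma^\star I - C \succ 0$, the map $z \mapsto z^\top(\gamma^\star I - C) z - 2\gamma^\star w^\top z + \gamma^\star \|w\|^2$ is strictly convex with unique minimizer $z = \gamma^\star(\gamma^\star I - C)^{-1} w = w^\star$. Substituting back gives $\phi(\gamma^\star, w) = \gamma^\star \|w^\star - w\|^2 - (w^\star)^\top C w^\star$. Taking expectations under $P_0$, using the first-step identity and the fact that $w^\star \sim P^\star$ whenever $w \sim P_0$, yields
\begin{align*}
\gamma^\star r^2 - \mathds{E}_{P_0}[\phi(\gamma^\star, w)] \;=\; \mathds{E}_{P^\star}[w^\top C w].
\end{align*}
Weak duality (the easy $\le$ direction of Theorem \ref{thm:strong_duality}) then gives $\sup_{P \in \Pcal} \mathds{E}_P[w^\top C w] \le \gamma^\star r^2 - \mathds{E}_{P_0}[\phi(\gamma^\star, w)]$, while $P^\star \in \Pcal$ gives the reverse inequality $\mathds{E}_{P^\star}[w^\top C w] \le \sup_{P \in \Pcal} \mathds{E}_P[w^\top C w]$. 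The chain forces equality throughout, so $P^\star$ attains the supremum.

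The main obstacle is conceptual: guessing that the worst-case distribution is generated by the linear transport \eqref{eq:w_star}. Once the ansatz is in place, each step reduces to routine linear-algebraic bookkeeping, exploiting the symmetry of $\gamma^\star(\gamma^\star I - C)^{-1}$ and the interpretation of \eqref{eq:opt_cond} as the transportation-cost budget. Notably, the convex reformulation \eqref{eq:quad_SDP_reform} itself is not needed in this argument; only the one-dimensional duality identity from Theorem \ref{thm:strong_duality} is invoked, and even strong duality is used only in its weak form, with the sandwich supplying tightness.
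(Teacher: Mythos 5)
Your proof is correct, and while the feasibility half (the deterministic coupling giving $W_2(P^\star,P_0)^2 \le \mathds{E}_{P_0}[\|w^\star - w\|^2] = r^2$ via \eqref{eq:opt_cond}) coincides with the paper's, your optimality half takes a genuinely different and leaner route. The paper proves that $\gamma^\star$ is the \emph{unique} minimizer of the dual objective $g(\gamma) = \gamma(r^2 - \trace{M_0}) + \gamma^2\trace{M_0(\gamma I - C)^{-1}}$ on $(\lambda_{\rm max}(C),\infty)$ --- via strict convexity, boundary behavior, and the first-order condition, which it identifies with \eqref{eq:opt_cond} --- then verifies $g(\gamma^\star) = \mathds{E}_{P^\star}[{w^\star}^\top C w^\star]$ by a trace computation and invokes the full strong duality of Theorem \ref{thm:strong_duality_quadratic} to transfer that value back to the primal. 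You instead observe that $w^\star$ is precisely the minimizer defining $\phi(\gamma^\star,w)$, so the dual objective evaluated at the single point $\gamma^\star$ already equals $\mathds{E}_{P^\star}[w^\top C w]$, and then close the loop with weak duality plus feasibility of $P^\star$. This sandwich requires neither the convexity analysis of $g$ nor strong duality (only the elementary $\le$ direction), and it delivers dual optimality of $\gamma^\star$ as a byproduct rather than as a separate lemma. What the paper's approach buys in exchange is the structural information that the dual minimizer on $(\lambda_{\rm max}(C),\infty)$ exists, is unique, and is characterized by \eqref{eq:opt_cond} --- which is what justifies solving that scalar equation in practice --- whereas your argument takes the existence of such a $\gamma^\star$ as a hypothesis (as the theorem statement does) and says nothing about uniqueness. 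All the individual steps you use check out: symmetry of $\gamma^\star(\gamma^\star I - C)^{-1} - I$ gives $\|w^\star - w\|^2 = w^\top(\gamma^\star(\gamma^\star I - C)^{-1} - I)^2 w$, strict convexity of the inner problem under $\gamma^\star I \succ C$ justifies the closed form of $\phi(\gamma^\star,\cdot)$, and finiteness of $\mathds{E}_{P_0}[\phi(\gamma^\star,w)]$ follows from $P_0$ having finite second moments.
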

\begin{proof}
    To prove the desired result, we need to show that $P^\star$ is both feasible and optimal in \eqref{eq:quadratic}. To streamline notation, let $C_\gamma := \gamma I - C$.
    
    First, we show that the distribution $P^\star$ belongs to the ambiguity set $\Pcal$ by verifying that it satisfies $W_2(P^\star,P_0) \le r$.  
    To achieve this, let  $\pi\in\Pi(P^\star,P_0)$ be the joint distribution of $w^\star$ and $w$. Then, we have that
    \begingroup
    \allowdisplaybreaks
    \begin{align*}
        W_2( &P^\star, P_0)^2 \\
        &\le \mathds{E}_\pi \left [(w^\star-w)^\top(w^\star-w) \right]\\
        &= {\rm Tr} \big(  \mathds{E}_{P^\star}\big [w^\star {w^\star}^\top \big] -2\mathds{E}_{\pi}\big [w {w^\star}^\top \big] + \mathds{E}_{P_0}\big [w w^\top \big] \big ) \\
        &\overset{(a)}{=} {\rm Tr} \big(    {\gamma^\star}^2 C_{\gamma^\star}^{-1} M_0 C_{\gamma^\star}^{-1} -2 \gamma^\star C_{\gamma^\star}^{-1} M_0 + M_0 \big )\\
        &= {\rm Tr} \big(    \big (\gamma^\star C_{\gamma^\star}^{-1} - I \big )^2 M_0 \big )\\
        &\overset{(b)}{=} r^2,
    \end{align*}
    \endgroup
    where (a) holds by \eqref{eq:w_star} and (b) holds by \eqref{eq:opt_cond}. 
    Hence, $P^\star \in \Pcal$ and is feasible for \eqref{eq:quadratic}. 

    Next, we prove that the distribution $P^\star$ is optimal for the worst-case expectation problem \eqref{eq:quadratic}.
    Let $g:\Rset_+ \to \Rset$ denote the dual objective function in \eqref{eq:quad_SDP_reform}.
    It can be expressed as
    \begin{align*}
        g(\gamma) &= \gamma (r^2 - \trace{M_0}) + \gamma^2 \trace{M_0 C_\gamma^{-1}}. 
    \end{align*} 
    The function $g$ is strictly convex on the interval $(\lambda_{\rm max}(C),\, \infty)$ since its second derivative satisfies
    \begin{align*}
        \frac{d^2 g(\gamma) }{d\gamma ^2 } = 2 \trace{M_0 C^2 C_\gamma^{-3}} > 0
    \end{align*}
     for all $\gamma \in (\lambda_{\rm max}(C) ,\,  \infty)$. 
     Also, note that $g$ goes to infinity as $\gamma$ tends to infinity or approaches $\lambda_{\rm max}(C)$ from the right.
    Consequently, the dual function $g$ has a unique minimizer $\gamma^\star$ in the interval $(\lambda_{\rm max}(C) ,\,  \infty)$.
    Hence, the first order optimality condition is a necessary and sufficient condition for optimality on the interval $(\lambda_{\rm max}(C),\, \infty)$. This condition is given by
    \begin{align*}
        \frac{dg(\gamma) }{d\gamma} &= r^2 - {\rm Tr} \big( M_0 - 2 \gamma C_\gamma^{-1} M_0 +{\gamma}^2 C_\gamma^{-2}M_0 \big ) = 0, 
    \end{align*}
    which can be rewritten as \eqref{eq:opt_cond}.
    
    By Theorem \ref{thm:strong_duality_quadratic}, the optimal value of the primal problem \eqref{eq:quadratic} is finite and equal to the optimal value of the dual problem \eqref{eq:quad_SDP_reform}. 
    Hence, to prove that $P^\star$ attains the supremum of \eqref{eq:quadratic}, it suffices to show that $\mathbb{E}_{P^\star}[{w^\star}^\top C w^\star] = g(\gamma^\star)$. It holds that
    \begingroup
    \allowdisplaybreaks
    \begin{align*}
        g(\gamma^\star)
        &\overset{(a)}{=} \gamma^\star {\rm Tr} \big( \big (\gamma^\star C_{\gamma^\star}^{-1} - I\big )^2 M_0 - M_0 + \gamma^\star C_{\gamma^\star}^{-1} M_0\big )\\
        &= \gamma^\star {\rm Tr} \big( \big (-\gamma^\star C_{\gamma^\star}^{-1} +{\gamma^\star}^2 C_{\gamma^\star}^{-2} \big )M_0\big )\\
        &= {\gamma^\star}^2 {\rm Tr} \big( C^{-1}_{\gamma^\star} M_0 \big (-I + \gamma^\star C_{\gamma^\star}^{-1}\big )\big )\\
        &\overset{(b)}{=}  {\rm Tr} \big(  {\gamma^\star}^2 C^{-1}_{\gamma^\star} M_0 C_{\gamma^\star}^{-1} C \big )\\
        &\overset{(c)}{=} \mathds{E}_{P^\star}[{w^\star}^\top C w^\star],
    \end{align*}
    \endgroup
    where (a) follows from plugging \eqref{eq:opt_cond} into $g(\gamma^\star)$;
    (b) holds since $-I + \gamma^\star C_{\gamma^\star}^{-1} = -C_{\gamma^\star}^{-1}C_{\gamma^\star} + \gamma^\star C_{\gamma^\star}^{-1} = C_{\gamma^\star}^{-1} C$;
    and (c) follows from \eqref{eq:w_star}. 
    Therefore, $P^\star$ is optimal and the desired result holds. \end{proof}

In Theorem \ref{thm:worst_dist}, the proposed worst-case distribution $P^\star$ is given by the push-forward measure of $P_0$ through the linear transformation $z \mapsto \gamma^\star (\gamma^\star I - C)^{-1} z$. 
It is also possible to show that $P^\star$ is an extremal distribution in the ambiguity set $\Pcal$.
In particular, by using a well-known lower bound on the type-2 Wasserstein distance due to Gelbrich \cite[Theorem 2.1]{gelbrich1990formula},  one can show that $W_2(P^\star,P_0) \ge r$.

\subsection{Semidefinite Programming Reformulation}
Building on Theorem \ref{thm:strong_duality_quadratic}, we now show how to reformulate the MROC problem \eqref{eq:MRO_original} as a SDP in Theorem \ref{thm:sdp of MROC}.

\begin{thm}[SDP Reformulation of MROC Problem]\label{thm:sdp of MROC} \rm Let $r>0$ and suppose that $K^\star\notin \Kcal$ and Assumption \ref{ass:AC}
holds. Then, the MROC problem \eqref{eq:MRO_original} can be equivalently reformulated as the following SDP:
\begin{subequations}\label{eq:sdp for MROC}
\begin{align}
    \inf~  &\gamma (r^2 - \trace{M_0}) + \trace{X} \\
    \text{ s.t. }
   & \ K\in\Kcal,\, \gamma\geq0,\, X\in\Sset_{+}^{N_x}, 
   \nonumber \\
    & \hspace{0.25mm}\begin{bmatrix}
        \gamma I & (K-K^\star)^\top\\
        K-K^\star & D^{-1}
    \end{bmatrix}\succ 0, \label{eq:lmi 1}\\
   & \begin{bmatrix}
        X & \gamma M_0^{\frac{1}{2}} & 0_{N_u\times N_x}^\top\\
        \gamma M_0^{\frac{1}{2}} & \gamma I & (K-K^\star)^\top \\
        0_{N_u\times N_x} & K-K^\star & D^{-1}
    \end{bmatrix}\succeq 0, \label{eq:lmi 2}
\end{align}
\end{subequations}
where $0_{N_u \times N_x}$ denotes a $N_u$-by-$N_x$ matrix of all zeros and the decision variables are $K$, $\gamma$ and $X$. 
\end{thm}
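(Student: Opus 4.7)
The plan is to turn the minimax problem \eqref{eq:MRO_reformulation} into a joint convex minimization by dualizing the inner supremum via Theorem \ref{thm:strong_duality_quadratic}, and then to encode the two remaining nonlinear pieces (a matrix inequality in $K$, and an inverse-coupling of $\gamma$ and $K$ in the objective) as LMIs through the Schur complement.

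First I would identify the inner problem $\sup_{P \in \Pcal} \mathds{E}_P[w^\top C(K) w]$ with $C(K) := (K - K^\star)^\top D (K - K^\star)$ and check the hypotheses of Theorem \ref{thm:strong_duality_quadratic}. The hypothesis $\lambda_{\rm max}(C(K)) \neq 0$ is exactly where the assumption $K^\star \notin \Kcal$ enters: for any $K \in \Kcal$ one then has $K \neq K^\star$, and since $D \succ 0$, this forces $C(K) \succeq 0$ with $C(K) \neq 0$, so $\lambda_{\rm max}(C(K)) > 0$. Applying Theorem \ref{thm:strong_duality_quadratic} and absorbing the outer infimum over $K \in \Kcal$ converts \eqref{eq:MRO_reformulation} into
\begin{align*}
\inf_{K \in \Kcal,\, \gamma \geq 0} \, \big\{\, \gamma (r^2 - \trace{M_0}) + \gamma^2 \trace{M_0 (\gamma I - C(K))^{-1}} \;\big|\; \gamma I \succ C(K) \,\big\}.
\end{align*}

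Next I would convert each nonlinear piece into an LMI via Schur complements. The strict inequality $\gamma I \succ (K-K^\star)^\top D (K-K^\star)$ is equivalent, via a Schur complement with the block $D^{-1} \succ 0$, to the LMI \eqref{eq:lmi 1}. For the objective, I would introduce an epigraph variable $X \in \Sset_+^{N_x}$ satisfying $X \succeq \gamma^2 M_0^{\frac{1}{2}} (\gamma I - C(K))^{-1} M_0^{\frac{1}{2}}$; by the cyclic property of trace this gives $\trace{X} \geq \gamma^2 \trace{M_0 (\gamma I - C(K))^{-1}}$, with equality attained at $X = \gamma^2 M_0^{\frac{1}{2}}(\gamma I - C(K))^{-1} M_0^{\frac{1}{2}}$, so the epigraph relaxation is tight. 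I would then show, by a \emph{double} application of the Schur complement (first eliminating the bottom-right block $D^{-1}$ to expose $\gamma I - C(K)$ as an intermediate Schur complement, then eliminating that block using the strict positive definiteness guaranteed by \eqref{eq:lmi 1}), that this matrix inequality is equivalent to the LMI \eqref{eq:lmi 2}.

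The main obstacle is the inverse coupling of the scalar $\gamma$ and the matrix $K$ through the term $\gamma^2 (\gamma I - C(K))^{-1}$, which cannot be linearized in a single Schur complement step. The key trick is the nested $3 \times 3$ block construction in \eqref{eq:lmi 2}: the lower $2 \times 2$ sub-block, together with the cross terms involving $K-K^\star$, recovers $\gamma I - C(K)$ as the Schur complement of $D^{-1}$, while the upper $2 \times 2$ sub-block encodes the epigraph inequality on $X$ once $\gamma I - C(K)$ has been certified positive definite. Chaining the two Schur reductions, matching the resulting objective $\gamma(r^2 - \trace{M_0}) + \trace{X}$ at optimality with $\gamma(r^2 - \trace{M_0}) + \gamma^2 \trace{M_0 (\gamma I - C(K))^{-1}}$, and combining with the equivalence of \eqref{eq:MRO_original} and \eqref{eq:MRO_reformulation} derived earlier yields precisely \eqref{eq:sdp for MROC}.
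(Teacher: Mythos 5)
Your proposal is correct and follows essentially the same route as the paper's proof: invoke Theorem \ref{thm:strong_duality_quadratic} on the inner supremum with $C_K = (K-K^\star)^\top D(K-K^\star)$ (using $K^\star \notin \Kcal$ and $D \succ 0$ to guarantee $\lambda_{\rm max}(C_K) > 0$), then linearize the constraint $\gamma I \succ C_K$ and the epigraph inequality on the term $\gamma^2\trace{M_0(\gamma I - C_K)^{-1}}$ via nested Schur complements, exactly as in \eqref{eq:lmi 1} and \eqref{eq:lmi 2}. The only cosmetic difference is that you read the second Schur reduction in the reverse direction (eliminating $D^{-1}$ from the $3\times 3$ block rather than expanding the $2\times 2$ inequality), which is the same equivalence.
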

\begin{proof}
To streamline notation,   define the matrix $$C_K:=(K-K^\star)^\top D (K-K^\star).$$
The condition $K^\star\notin \Kcal$ ensures that $\lambda_{\rm max}(C_K)>0$ for all $K \in \mathcal{K}$. 
This, together with Assumption \ref{ass:AC}, allows us to apply Theorem \ref{thm:strong_duality_quadratic} to reformulate the MROC problem as
\begin{align}
   & \hspace{-4mm}\inf_{K\in\Kcal}\sup_{P\in\Pcal}\mathds{E}_P[ w^\top C_K  w ] \nonumber\\
    &\hspace{-4mm}=\!
    \inf_{K\in\Kcal,\gamma\geq 0}\bigl\{
    \gamma (r^2 - \trace{M_0}) \nonumber\\
    & \hspace{14mm}+ \gamma^2 \trace{M_0 (\gamma I - C_K )^{-1}}
    \,|\, \gamma I \succ C_K \bigr\}. \label{eq:write inner max as min}
\end{align}
Since $D \succ 0$, it follows from  the Schur complement theorem that the constraint $\gamma I\succ C_K$ is equivalent to the linear matrix inequality \eqref{eq:lmi 1}. Now, observe that the second term $\gamma^2 \trace{M_0 (\gamma I - C_K )^{-1}}$ in the right-hand side of \eqref{eq:write inner max as min} can be expressed as
$$
\min_{X\succeq 0} \big\{ \trace{X} \, \big|
\,  X  \succeq   \gamma M_0^{\frac{1}{2}} (\gamma I - C_K )^{-1}  \gamma M_0^{\frac{1}{2}}
\big\}.
$$
Once again, using the Schur complement theorem, we can reformulate the constraint $X \!\succeq\! \gamma M_0^{\frac{1}{2}} (\gamma I - C_K )^{-1}  \gamma M_0^{\frac{1}{2}}$ as
\begin{equation}
    \begin{bmatrix}
        X & \gamma M_0^{\frac{1}{2}}\\
        \gamma M_0^{\frac{1}{2}} & \gamma I -C_K
    \end{bmatrix}\succeq 0, \label{eq:nonlinear matrix inequality}
\end{equation}
since $\gamma I-C_K \succ 0$. To prove equivalence between the matrix inequality \eqref{eq:nonlinear matrix inequality} and \eqref{eq:lmi 2},  it is helpful to rewrite \eqref{eq:nonlinear matrix inequality} as 
\begin{equation}
    \begin{bmatrix}
        X & \gamma M_0^{\frac{1}{2}}\\
        \gamma M_0^{\frac{1}{2}} & \gamma I 
    \end{bmatrix}
    -
    \begin{bmatrix}
        0_{N_u\times N_x}^\top \\ (K-K^\star)^\top
    \end{bmatrix}
    D
    \begin{bmatrix}
        0_{N_u\times N_x}^\top \\ (K-K^\star)^\top
    \end{bmatrix}^\top \succeq 0. \nonumber
\end{equation}
Since $D \succ 0$, it follows from  the Schur complement theorem that the above constraint is equivalent to \eqref{eq:lmi 2}. This proves that problem \eqref{eq:write inner max as min} is equivalent to the SDP \eqref{eq:sdp for MROC} and completes the proof.
\end{proof}
Note that the assumption $K^\star\notin\Kcal$ is without loss of generality. If the optimal noncausal controller $K^\star$ happens to be causal, i.e., $K^\star\in\Kcal$, then the minimax regret optimal controller is just $u=K^\star w$, which avoids the need to solve problem \eqref{eq:MRO_original}. 

The size of the SDP in \eqref{eq:sdp for MROC} grows polynomially with the state dimension $n$, input dimension $m$, and time horizon $T$.  Although SDPs can be solved with arbitrary accuracy in polynomial time using interior point methods, in practice they are expensive to solve in high dimensions. To improve scalability, one can exploit sparsity in the problem data \cite{vandenberghe2015chordal,zheng2020chordal}, or employ approximations of the positive semidefinite cone \cite{louca2016hierarchy,ahmadi2019dsos}.
\section{Numerical Experiments} \label{sec:experiments}

In this section, we compare the MRO and DRO control design methods within the context of data-driven control.
We consider a one-dimensional ($n=1$) controlled random walk with dynamics given by
\begin{align*}
    x_{t+1} = x_t + u_t + w_t
\end{align*}
 for $t=0,...,T-1$.  In our experiments, we consider a time horizon of $T=10$ and take the cost matrices to be $Q, \, R = I$.
The disturbance trajectory $w$ is assumed to have a  Gaussian distribution given by $\mathcal{N}(\mu,I)$. 

We assume that the controllers have access to $N$ i.i.d. samples $w^{(1)}, \dots, w^{(N)}$ of the disturbance trajectory,  but do not know the underlying distribution from which the data is generated. 
Using these data, we consider an ambiguity set $\mathcal{P}$ that is centered at a nominal distribution $P_0 \in \Mcal( \Rset^{N_x})$ whose second moment matrix is given by the empirical estimate:
\begin{align}  \label{eq:M_hat}
    M_0 = \frac{1}{N} \sum_{i=1}^N w^{(i)} {w^{(i)}}^\top.
\end{align}
Note that the SDP reformulations of the MROC and DROC problems only depend on the second moment matrix $M_0$ of the nominal distribution $P_0$, so specifying its second moment suffices.
In our experiments, we vary the radius $r$ of the ambiguity set  to investigate its effect on the performance of the resulting MRO and DRO controllers. 

\begin{figure}[t!]
    \centering
    \subfloat[$\mu=0$\label{fig:mu_zero}]{\includegraphics[width=0.9\columnwidth]{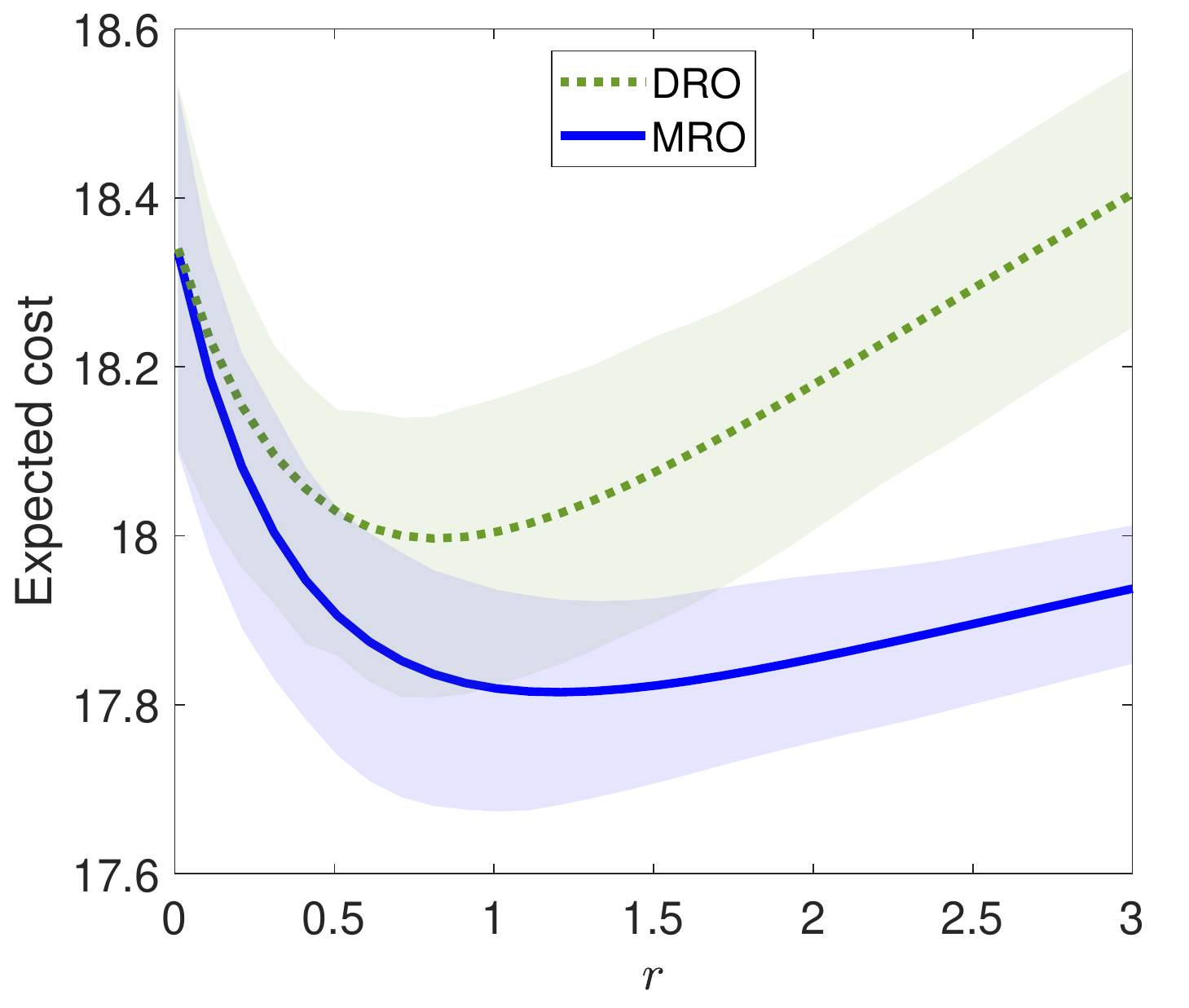}}\\ \vspace{10pt}
    \subfloat[$\mu=1$\label{fig:mu_nonzero}]{\includegraphics[width=0.9\columnwidth]{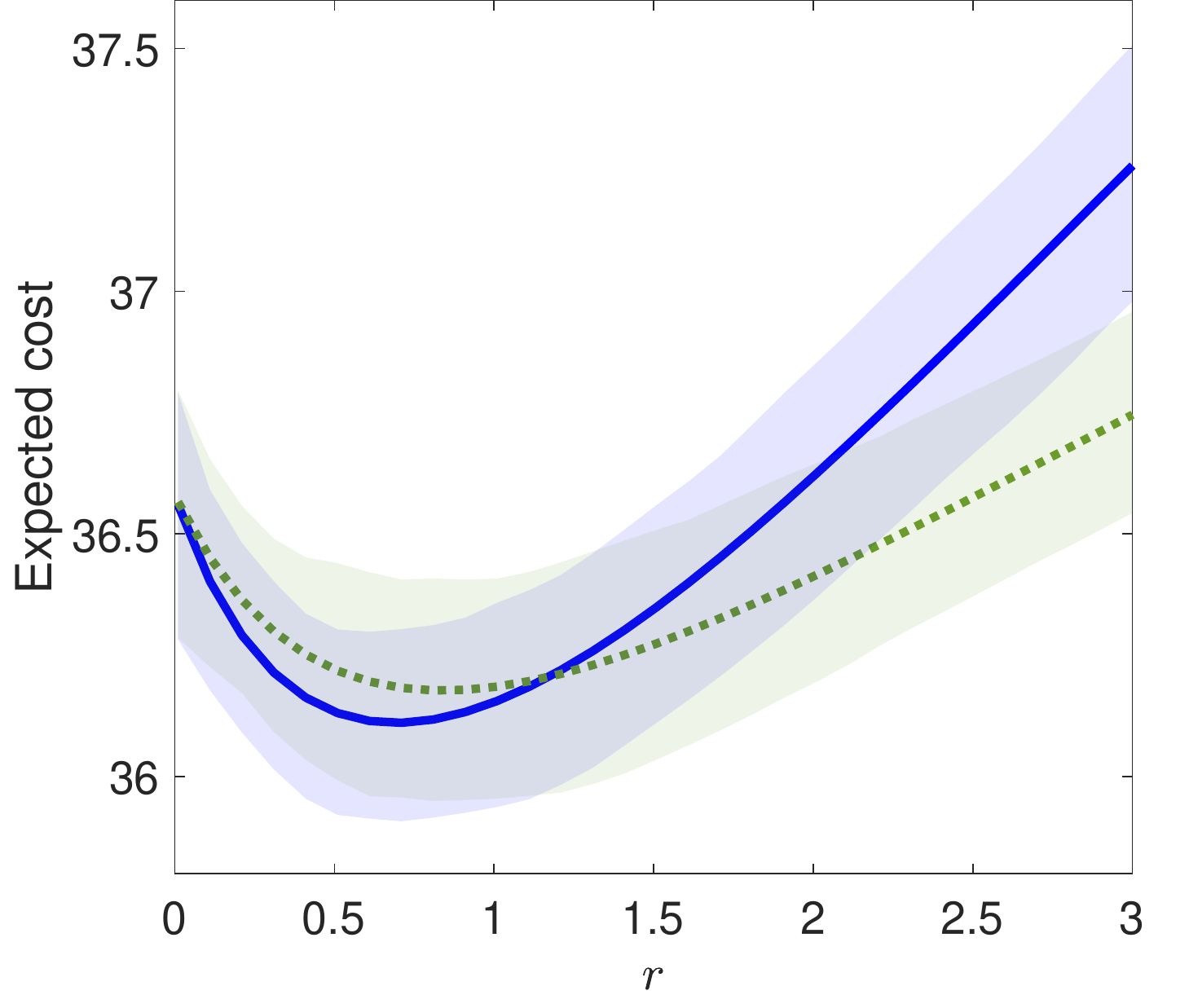}}
    \caption{These plots depict the expected cost (averaged over 100 trials) incurred by the MRO and DRO controllers under the true data-generating distribution $\mathcal{N}(\mu, \, I)$ versus the ambiguity set radius $r$. The shaded regions depict the range between the corresponding 20th and 80th empirical quantiles.
    }
    \label{fig:num_sim}
\end{figure}

We examine two different data-generating distributions:  one that has zero mean vector ($\mu =0$) and one that has a constant nonzero mean vector of all ones ($\mu =1$). 
For each distribution, we conduct 100 independent trials. In each trial, we draw $N = 50$ i.i.d. samples of the disturbance trajectory from the underlying distribution $\Ncal(\mu, \, I)$. 
We vary the radius of the ambiguity set  from  zero to three and compute the MRO controller for each radius value by solving the SDP \eqref{eq:sdp for MROC}. 
We compute the DRO controller by solving a SDP reformulation of \eqref{eq:DRO_original} that follows from Theorem \ref{thm:sdp of MROC}.

In Fig. \ref{fig:num_sim}, we plot the expected costs (averaged over 100 trials) incurred by the MRO and DRO controllers   as a function of the ambiguity set radius. In each case, the expectations are calculated with respect to the true  data-generating distribution.

When the radius $r$ of the ambiguity set is equal to zero, the MRO and DRO control problems become identical to the ambiguity-free stochastic control problem given by $\inf_{K \in \Kcal} \mathds{E}_{P_0}[J(Kw, \, w)]$, which can be interpreted as a \textit{certainty equivalent} approach to data-driven control.
For small values of $r$, we observe that both the MRO and DRO controllers improve upon the performance of the certainty equivalent controller.
In the zero-mean case (Fig. \ref{fig:mu_zero}), the MRO controller strictly outperforms the DRO controller for all radii in the specified range.
In the nonzero-mean case (Fig.~\ref{fig:mu_nonzero}), the MRO controller outperforms the DRO controller for small radius values, while the expected cost incurred by the DRO controller is smaller than that of the MRO controller for larger radii.
This reveals that, in some settings, the choice of ambiguity set radius $r$ can have an important effect on the superiority of one method over the other.
That said, for both distributions, the MRO controller achieves the smallest expected cost over the specified radius range, compared to the DRO controller.

\section{Conclusion} \label{sec:conclusion}

In this paper, we propose a distributionally robust approach to synthesizing minimax regret optimal controllers for   linear discrete-time systems subject to stochastic additive disturbances whose probability distribution is only known to lie in a type-2 Wasserstein ball of distributions. Building on existing strong duality results for Wasserstein distributionally robust optimization problems, we  provide an equivalent reformulation of the minimax regret optimal control problem as a tractable semidefinite program. As a direction of future research, it would be interesting to generalize the framework proposed in this paper to  enable the design of minimax regret optimal controllers with partial state feedback.

\bibliographystyle{IEEEtran}
\bibliography{references}{\markboth{References}{References}}

\end{document}